\newcommand{\urltilde}{\kern -.15em\lower .7ex\hbox{~}\kern .04em}  
\newcommand*\bigcdot{\mathpalette\bigcdot@{2.0}}
\newcommand*\bigcdot@[2]{\mathbin{\vcenter{\hbox{\scalebox{#2}{$\m@th#1\bullet$}}}}}
\newcommand{\Stab}{\mathop{\mathrm{Stab}}\nolimits}
\newcommand{\proj}{\mathop{\mathrm{proj}}\nolimits}
\newcommand{\Aut}{\mathop{\mathrm{Aut}}\nolimits}
\newcommand{\Res}{\mathop{\mathrm{Res}}\nolimits}
\newcommand{\Ind}{\mathop{\mathrm{Ind}}\nolimits}
\newcommand{\Rep}{\mathop{\mathrm{Rep}}\nolimits}
\begin{document}
\title*{Generalized iterated wreath products of symmetric groups and generalized rooted trees correspondence}
\author{Mee Seong Im and Angela Wu} 
\institute{Mee Seong Im \at Department of Mathematical Sciences, United States Military Academy, West Point, NY 10996 USA  \newline 
\email{meeseongim@gmail.com}
\and Angela Wu \at Department of Mathematics, University of Chicago, Chicago, IL 60637 USA
\newline 
 \email{wu@math.uchicago.edu}}
%
%
\titlerunning{Iterated wreath products of symmetric groups and rooted trees}
\maketitle


\abstract{Consider the generalized iterated wreath product $S_{r_1}\wr \ldots \wr S_{r_k}$ of symmetric groups. We give a complete description of the traversal for the generalized iterated wreath product. We also prove an existence of a bijection between the equivalence classes of ordinary irreducible representations of the generalized iterated wreath product and orbits of labels on certain rooted trees. We find a recursion for the number of these labels and the degrees of irreducible representations of the generalized iterated wreath product. Finally, we give rough upper bound estimates for fast Fourier transforms.  \\  \\ 
\textbf{Keywords}: Iterated wreath products, symmetric groups, rooted trees, irreducible representations, fast Fourier transform, Bratteli diagrams. \\  \\ 
\textit{AMS Subject Classification}: Primary 20C30, 20E08; Secondary 65T50, 05E18, 05E10
}

\section{Introduction}\label{section:introduction}

The representation theory of the symmetric group is remarkably prevalent in combinatorics; one can explicitly parametrize the irreducible representations of the symmetric group using Young diagrams, leading us to the study of the interaction of these diagrams, an examination of the decomposition of tensor products of Young diagrams, and an investigation of the dimension of the irreducible representation associated to a Young diagram. 
Wreath products of symmetric groups arise as the automorphism group of regular rooted trees (see Theorem 2.1.6 or Theorem 2.1.15 in \cite{MR3202374}), with applications ranging from functions on rooted trees (see Section~\ref{subsection:Bratteli}), pixel blurring  (cf. \cite{Stankovic-Moraga-Astola}, \cite{Chang-Thesis},  \cite{Mirchandani99multiresolution-analysis}, \cite{Mirchandani99awreath}, \cite{Holmes-mathematical-foundations}, \cite{Holmes-signal-processing}), symmetries of nonrigid molecules in molecular spectroscopy (cf. \cite{balasubramanian1979enumeration}, \cite{MR585739}, \cite{milot2001energy}, \cite{schnell2010understanding}), and visual information processing (cf. \cite{borsa2015wreath}, \cite{leyton2003generative}) 
 to choosing subcommittees from sets of committees and voting (cf.  \cite{MR3625572}, \cite{MR2572103}, \cite{Lee-Stephen}).  
With motivation from  \cite{Branching-diag-cyclic-Im-Wu} and \cite{MR2081042},   
we consider generalized iterated wreath product $W(\mathbf{r}|_k):= S_{r_1}\wr \ldots \wr S_{r_k}$ of symmetric groups, where $S_{r_i}$ is the symmetric group on $r_i$ letters, and study its representation theory.

Throughout this manuscript, let $G$ be a finite group, and let $V$ be a vector space over the complex numbers $\mathbb{C}$. Let $GL(V)$ be the general linear group on $V$, and let $\rho:G\rightarrow GL(V)$ be a representation of $G$, i.e., $\rho$ is a group homomorphism. 
We say two representations  $\rho:G\rightarrow GL(V)$ and $\eta:G\rightarrow GL(W)$   are equivalent, and write $\rho \sim \eta$, if there exists a vector space isomorphism $f:V\rightarrow W$ such that $f\circ \rho(g)=\eta(g)\circ f$ for all $g\in G$. 
 We denote by $\widehat{G}$ the set of irreducible representations of $G$.   
We say that  
$\mathcal{R}$ is a \textit{traversal} for $G$ if $\mathcal{R} := \mathcal{R}_G \subset \widehat{G}$ contains exactly one irreducible representation for each isomorphism class in $\widehat{G}$. Thus a traversal consists of a complete list of pairwise inequivalent irreducible representations of $G$. 
As a basic consequence of representation theory, the equality $\sum_{\rho \in \mathcal{R}} \dim(\rho)^2 = |G|$ holds, where the sum is over all irreducible representations in $\mathcal{R}$.

Let $[n]:=\{1,2,\ldots, n \}$, the set of integers from $1$ to $n$, and let 
$$
[n]^{\ell}:=\{ x_1 x_2\cdots x_{\ell}: x_i\in [n] \}, 
$$ 
 the set of length $\ell$ words with letters in $[n]$.

Now given a subgroup $H \leq G$, we write $\Ind_H^G:\Rep(H)\rightarrow \Rep(G)$ to be the induction functor from the category of representations of $H$ to the category of representations of $G$. 
That is, given a representation $\eta\in \widehat{H}$ of subgroup $H\subseteq G$, where $\eta:H\rightarrow GL(V)$, 
 we write $\Ind_H^G \eta = \mathbb{C}[G]\otimes_{\mathbb{C}[H]}V$, the {\it induced representation} of $G$ from $\eta$ with dimension $[G:H] \cdot \dim \eta$. 
There also exists the dual construction to induction called restriction.  Given a subgroup $H$ of $G$,
$\Res_H^G:\Rep(G)\rightarrow \Rep(H)$ is the restriction functor from the category of representations of $G$ to the category of representations of $H$, i.e., 
given a representation $\rho$ of $G$, we obtain the {\em restricted representation} $\Res_H^G\rho$ of $H$ by restricting $\rho$ to $H$. 
The induction and restriction functors are related by Frobenius reciprocity. 
We refer the reader to \cite{MR0202859} for a detailed and elegant discussion on the duality of the induction and restriction functors.

	We say that $\mathbf{\alpha} = (\alpha_1, \ldots, \alpha_h)$ is a \textit{partition} of a natural number $n>0$, and write $\mathbf{\alpha} \vdash n$, 
	if every $\alpha_i$ satisfies the following: 
	\begin{enumerate}
	\item for each $i$, $\alpha_i \in \mathbb{N}=\{ 1,2,3,\ldots\}$, 
	\item $\alpha_1\geq \alpha_2\geq \ldots \geq \alpha_h$, and 
	\item $\sum_{i=1}^h \alpha_i = n$.  
	\end{enumerate}
	If $\alpha\vdash n$, then we will denote the length of $\alpha$ by $|\mathbf{\alpha}|= h$.
We will write $\alpha \models_h n$ to denote that $\alpha = (\alpha_1,\ldots, \alpha_h) \in \left( \mathbb{Z}_{\geq 0}\right)^h$ is  a {\it weak composition} of the natural number $n$ with $h$ parts so that each $\alpha_i\geq 0$ is an integer and $\sum_{i=1}^h \alpha_i = n$.

For 
$\mathbf{\alpha} \models_h n$,  we define 
$$
S_{\mathbf{\alpha}}:= S_{\alpha_1} \times S_{\alpha_2} \times \cdots \times S_{\alpha_h} \leq S_n, 
$$
the permutation subgroup acting on the set $[n]$ by the full action on $h$ disjoint orbits of size $\alpha_j$. 
We also define $S_0 = \{ 1\}$.

For a group $G$, 
we will now discuss inner and outer tensor products associated to describing the irreducible representations of $G\wr S_n$. 
If $\rho$ and $\eta$ are representations of $G$, then their inner tensor product $\rho\otimes \eta$ is again a representation of $G$ defined by $(\rho\otimes \eta)(g)= \rho(g)\otimes \eta(g)$, where $g\in G$. 
If $\rho$ is a representation of $G$ and $\eta$ is a representation of a group $H$, then their outer tensor product $\rho\boxtimes\eta$ is a representation of $G\times H$ defined by 
$(\rho\boxtimes\eta)(g,h)= \rho(g)\otimes \eta(h)$, where $g\in G$ and $h\in H$.

 The irreducible representations of the base group $G^n=G\times \cdots \times G$ are $n$-fold outer tensor products of irreducible representations of $G$.  
 If $\mathcal{R}=\{ \rho_1,\ldots,\rho_h \}$ is a traversal for $G$ and $\alpha\models_h n$, then 
 $\rho_1^{\boxtimes \alpha_1}\boxtimes \cdots \boxtimes \rho_h^{\boxtimes \alpha_h}$ is an irreducible representation of $G^n$, which can be extended to an irreducible representation $(\rho_1^{\boxtimes \alpha_1}\boxtimes \cdots \boxtimes \rho_h^{\boxtimes \alpha_h})'$ of the inertia group $G\wr S_{\alpha}$. 
 On the other hand, if $\sigma\in  \mathcal{R}_{S_{\alpha}}$, 
 then composing $\sigma$ with the projection of $G\wr S_{\alpha}$ onto $S_{\alpha}$ gives an irreducible representation $\sigma'$ of $G\wr S_{\alpha}$. 
 Now, the inner tensor product of  $(\rho_1^{\boxtimes \alpha_1}\boxtimes \cdots \boxtimes \rho_h^{\boxtimes \alpha_h})'$ and $\sigma'$ is an irreducible representation of $G\wr S_{\alpha}$, and the induced representation $ \Ind_{G\wr S_\alpha}^{G \wr S_n} ((\rho_1^{\boxtimes \alpha_1} \boxtimes \cdots \boxtimes \rho_h^{\boxtimes \alpha_h})' \otimes \sigma')$ is an irreducible representation of $G\wr S_n$. 
 With these remarks, 
 we give an explicit description of the traversal of $W(\mathbf{r}|_k)$: 
	\begin{theorem}\label{thm:traversal}
For $N>0$, 
let $\mathcal{R}_G = \{ \rho_1, \ldots , \rho_h \}$ be a traversal for a group $G \leq S_N$. Let $\mathbf{\alpha} \models_h n$. 
		 Then the irreducible representations given by
		 \begin{equation}\label{equation:traversal-wreath-full-symmetric}
		 \left\{ \Ind_{G\wr S_\alpha}^{G \wr S_n} ((\rho_1^{\boxtimes \alpha_1} \boxtimes \cdots \boxtimes \rho_h^{\boxtimes \alpha_h})' \otimes \sigma') : \alpha \models_h n, \sigma \in \mathcal{R}_{{S}_{\alpha}} \right\}
		 \end{equation}
		 form a traversal for $G\wr S_n$.  
		 In particular, if $\mathcal{R}_{W(\mathbf{r}|_{k-1})} = \{ \rho_1 ,\ldots , \rho_h\}$ is a traversal for the wreath product $W(\mathbf{r}|_{k-1})$, then a traversal for $W(\mathbf{r}|_k)$ is 
				\begin{equation}		
				\label{eq_set-of-irreps}	 
\mathcal{R}_{W(\mathbf{r}|_{k})} = \left\{ \Ind_{W(\mathbf{r}|_{k-1}) \wr S_\alpha}^{W(\mathbf{r}|_k)} ((\rho_1^{\boxtimes \alpha_1} \boxtimes \cdots \boxtimes \rho_h^{\boxtimes \alpha_h})' \otimes \sigma') :  \alpha \models_h r_k, \sigma \in \mathcal{R}_{S_{\alpha}} \right\},  
		 		 \end{equation}
		 		 where $S_{\alpha}$ is a subgroup of $S_{r_k}$. 
	\end{theorem}

Note that we write $\rho^\alpha := \rho_1^{\boxtimes \alpha_1} \boxtimes \cdots \boxtimes \rho_h^{\boxtimes \alpha_h}$, where $\rho_1,\ldots, \rho_h$ are traversals of a group $G$, and we define $\rho^0 := 1$.

We also find a recursion for the number of equivalence classes of ordinary irreducible representations of the generalized iterated wreath products in Corollary~\ref{cor:recursion-number-of-irrep}, and their dimensions are given in Proposition~\ref{prop:dimension-irreducible-rep}.

A {\it rooted tree} is a connected simple graph with no cycles, 
and with a distinguished vertex, which is called the {\it root}. 	
We refer to Section~\ref{subsection:rooted-trees} for a further discussion on rooted trees. 	
We recall the following theorem: 
\begin{theorem}[Theorem 2.1.15, \cite{MR3202374}]\label{thm:geometric-construction-tree}
Let $\mathcal{T}(\mathbf{r}|_k)$ be a complete $\mathbf{r}$-tree of height $k$. 
We have 
\[
\Aut(\mathcal{T}(\mathbf{r}|_k)) \cong W(\mathbf{r}|_k). 
\] 
\end{theorem}

We find a bijection between equivalence classes of ordinary irreducible representations of the generalized iterated wreath product  
$W(\mathbf{r}|_k)$ and the orbits of families of labels on certain complete trees, thus connecting to the geometric construction in Theorem~\ref{thm:geometric-construction-tree}: 
\begin{theorem}\label{theorem:bijection-tree-iterated-wreath}
There is a bijection between equivalence classes $\widehat{W}(\mathbf{r}|_k)$ of ordinary irreducible representations of the iterated wreath product of symmetric groups and $W(\mathbf{r}|_k)$-orbits of rooted trees $\mathcal{T}( \mathbf{r}|_k)$. 
\end{theorem}

\subsection{Summary of the sections}\label{subsection:summary}

We begin Section~\ref{section: background} with some background.  
We give a summary of the representation theory of symmetric groups in Section~\ref{subsection:rep-theory-symmetric-group},  give the construction of iterated wreath products in Section~\ref{subsection:wreath-products}, and discuss Clifford theory in Section~\ref{subsection:Clifford-thy}. We give the construction of rooted trees in Section~\ref{subsection:rooted-trees}, and Bratteli diagrams in Section~\ref{subsection:Bratteli}. We conclude the background section by reviewing adapted bases and fast Fourier transforms  in Section~\ref{subsection:adaped-bases-FFT}. 
We prove Theorem~\ref{thm:traversal} in Section~\ref{section:irrep-iterated-wreath-products}, and we prove Theorem~\ref{theorem:bijection-tree-iterated-wreath} in Section~\ref{section:branching-diagram}. 
 We also give the dimension of an irreducible representation of the iterated wreath product in Proposition~\ref{prop:dimension-irreducible-rep}. 
 In Section~\ref{section:FFT}, we give coarse upper bound estimates for fast Fourier transforms for $G\wr S_n$ (and thus for the generalized iterated wreath product $W(\mathbf{r}|_k)$ in Corollary~\ref{cor:FFT-symmetric-groups}), and in Section~\ref{section:conclusion-open-problem}, we discuss some open problems.

\subsection{Acknowledgment}    
The authors acknowledge Mathematics Research Communities for providing an exceptional working environment at Snowbird, Utah. They would like to thank Michael Orrison for helpful discussions, and the referees for immensely valuable comments. 
This paper was written during MSI's visit to the University of Chicago in 2014. She thanks their hospitality.

\section{Background}\label{section: background}
 
We will begin by giving some necessary background. 

\subsection{Representations of the symmetric group}\label{subsection:rep-theory-symmetric-group}

We refer to \cite{MR2643487}, \cite{MR1153249}, \cite{MR644144}, \cite{MR0325752}, and \cite{kleshchev2010representation}  for an extensive background on representations of symmetric groups. In this section, we will give a brief summary of the representation theory of symmetric groups. 

The symmetric group $S_n$ has order $n!$ whose conjugacy classes are labeled by partitions of $n$. Thus, the number of inequivalent irreducible representations over $\mathbb{C}$ is equal to the number of partitions of $n$. One may also parametrize irreducible representations by the same set that parametrizes conjugacy classes for $S_n$, 
which is by partitions of $n$, or, equivalently, the more commonly used of so-called Young diagrams of size $n$ (see Example~\ref{example:Bratteli-diagram}). 

\subsection{Wreath products}\label{subsection:wreath-products}
We refer to Chapter 2 in \cite{MR3202374} for a beautiful exposition on the construction of the wreath product $G\wr H$ of a finite group $G$ with a subgroup $H\leq S_n$, which is summarized as follows. Define an action of $H$ on $G^n=G\times \cdots \times G$
 by if $\pi\in H$ and $a=(a_1,a_2,\ldots, a_n)\in G^n$, then 
$\pi\cdot a := a^{\pi}=(a_{\pi^{-1}(1)},a_{\pi^{-1}(2)},\ldots, a_{\pi^{-1}(n)})$. 
The wreath product $G\wr H$ is defined to be $G^n\times H$ as a set, with multiplication given by 
\begin{equation}\label{eqn:multn-wreath-product}
(a;\pi)(b;\sigma) = (ab^{\pi};\pi\sigma). 
\end{equation}

Throughout this paper, we will fix $\mathbf{r} = (r_1,r_2,r_3,\ldots) \in \mathbb{N}^\omega$, a positive integral vector. We denote by $\mathbf{r}|_k :=(r_1,r_2,\ldots, r_k)$, the length $k$ vector found by truncating $\mathbf{r}$.

Let $H$ be a finite group. 
Let $H^X:= \{ f:X\rightarrow H\}$, a set of all maps from $X$ to $H$, which is a group under pointwise multiplication:   $(f\circ f')(x)=f(x)f'(x)$ for all $x\in X$.  
Now for $H$ acting on a set $X$ and $G$ acting on a set $Y$, 
\[
(g,h)^{-1}(x,y)=(h^{-1}g^{-1},h^{-1})(x,y) = (h^{-1}x,g(x)^{-1}y) 
\] 
for all $(g,h)\in G\wr H$ and $x\in X$ and $y\in Y$.

\begin{definition}\label{definition:chain-of-subgroups}

Let $S_{r_i}$ be a symmetric group acting on a finite set of order $r_i$ for every $1\leq i\leq k$. Assume $S_{r_i}$ acts on a finite set $X_i$, where 
$1\leq i\leq k-1$. Set $V_{k+1}=\{ \varnothing\}$ and, 
for $i=1,2,\ldots, k$, let 
\[ 
V_i= X_i\times X_{i+1}\times \cdots \times X_{k-1} \times X_k. 
\] 
The {\em generalized iterated wreath product } $W(\mathbf{r}|_k):= S_{r_1}\wr\ldots \wr S_{r_k}$ of symmetric groups consists of all $k$-tuples 
$(g_1,\ldots, g_k)$, where $g_k\in S_{r_k}$, and 
$g_i:V_{i+1}\rightarrow S_{r_i}$, $1 \leq i< k$, with the multiplication law and action on $V_{1}$ recursively defined in the following way: 
\[ 
\begin{split}
(g_i,\ldots, g_k)&(g_i',\ldots, g_k') =  \\
\bigg(g_i\cdot (g_{i+1},\ldots, g_{k-1}, &g_k)g_i', 
      (g_{i+1},\ldots, g_{k-1}, g_k)(g_{i+1}',\ldots, g_{k-1}', g_k')\bigg), \\
\end{split}
\] 
where 
\[ 
\begin{split}
\bigg((g_{i+1},g_{i+2},\ldots, g_{k-1}, &g_k)g_i'\bigg)
 (x_{i+1},\ldots, x_{k-1},x_k)
= \\ 
g_i'\bigg((g_{i+1},&\ldots, g_{k-1}, g_k)^{-1} (x_{i+1},\ldots, x_{k-1},x_k)  \bigg), \\
\end{split}
\] 
and by 

\begin{equation}
\begin{split}
(g_{i+1}, g_{i+2},\ldots, &g_{k-1}, g_k) (x_{i+1},\ldots, x_{k-1},x_k) 
   = \\ 
  \bigg( 
(g_{i+2}, & 
\ldots, g_{k-1}, g_k)(x_{i+2},\ldots, x_{k-1},x_k), \\ 
&
g_{i+1}  (g_{i+2},\ldots, g_{k-1},g_k)(x_{i+2},\ldots, x_{k-1},x_k) x_{i+1}
\bigg) \\
\end{split}
\end{equation}
for all $x_j\in X_j$, $g_j\in S_{r_j}^{V_{j+1}}$,  
$i\leq j\leq k$ and $i=1,2,\ldots, k$. 
\end{definition}

\begin{remark}\label{remark:defn-iterated-wreath-symm}  
The generalized $k$-th $\mathbf{r}$-symmetric wreath product $W(\mathbf{r}|_k)$ 
could also be defined recursively by 
	\begin{equation*}
		W(\mathbf{r}|_0) = \{ 1 \} \text{ and } W(\mathbf{r}|_k) = W(\mathbf{r}|_{k-1}) \wr S_{r_k}, 
	\end{equation*}
	where the multiplication for the wreath product $W(\mathbf{r}|_k)$ is defined recursively using \eqref{eqn:multn-wreath-product}. 
 \end{remark}

\begin{example}\label{ex:wr-prod-symm}
Note that $W(\mathbf{r}|_1) = S_{r_1}$, $W(\mathbf{r}|_2) = S_{r_1} \wr S_{r_2}$, and $W(\mathbf{r}|_k) = S_{r_1} \wr S_{r_2}\wr \ldots \wr S_{r_k}$. 
\end{example}

Throughout this paper, we will be considering the chain of groups given in Remark~\ref{remark:defn-iterated-wreath-symm}.

\subsection{Clifford theory}\label{subsection:Clifford-thy}

The following references \cite{MR1038525}, \cite{MR1060103}, \cite{MR2760311}, and \cite{MR3202374} contain an extensive background on Clifford theory, which allow one to use recursion to construct the irreducible representations of a group. 
In this manuscript, 
we will give a brief overview of the main results of Clifford theory and the little-group method.

Let $G$ be a finite group and let $N \triangleleft G$ be a normal subgroup of $G$. For two representations $\sigma, \rho$, we write $\sigma \prec \rho$ if $\sigma$ is a {\it subrepresentation} of $\rho$. We say that $\widetilde{\sigma} \in \widehat{G}$ is an {\em extension} of $\sigma \in \widehat{N}$ if $\Res_N^G \widetilde{\sigma} = \sigma$. 

\begin{definition}
Fix $\theta \in \widehat{N}$ and $g \in G$. 
	\begin{enumerate}
	\item We define  		
		\begin{equation}
			\widehat{G}(\theta) := 
			\left\{ \rho \in \widehat{G} : \theta \prec \Res_{N}^G \rho \right\}. 
		\end{equation}
	\item The $g$-conjugate $\sigma^g \in \widehat{N}$ of $\sigma$ is defined as $\sigma^g(h) := \sigma(g h g^{-1})$ for any $h \in N$. 
	\item The inertia group of $\sigma$ in $G$ is given by $I_G(\sigma) := \{ g \in G : \sigma^g \sim \sigma \}$. 
	\end{enumerate} 
\end{definition}

Now, the finite group $G$ also acts on the set of inequivalent irreducible representations of $N$. For any irreducible representation $\sigma$ of $N$, let 
$\Delta(\sigma)$ denote its orbit under this action, i.e., inequivalent conjugates of $\sigma$. 
Let $\Stab_G(\sigma)$ be the isotropy subgroup of $\sigma$ under the $G$-action. 
\begin{theorem}[Clifford theory]\label{thm:Clifford-theory}
Let $N$ be a normal subgroup of $G$. 
\begin{enumerate}
\item(\cite{MR0202859}, Theorem 10)\label{item:Clifford-one} 
If $\sigma$ be a representation of $N$, then 
\[ 
\Res_N^G \Ind_N^G \sigma = [\Stab(\sigma):N]\cdot \Delta(\sigma). 
\] 
\item(\cite{MR0202859}, Theorem 14)\label{item:Clifford-two} 
If $\rho:G\rightarrow GL(V)$ is an irreducible representation of $G$, then 
\[ 
\Res_N^G \rho =  \frac{d_{\rho}}{[G:\Stab(\sigma)]d_{\sigma}}\cdot \Delta(\sigma), 
\] 
where $\sigma$ is any irreducible representation of $N$ appearing in $\Res_N^G \rho$, and 
$d_{\rho}$ is the dimension of the vector space $V$.  
\end{enumerate}
\end{theorem}
We also call $d_{\rho}$ the {\em degree} of the representation $\rho$. 
Next, the little-group method provided below is motivated by Chapter 5 Section 1 in \cite{MR1363490}. 
\begin{theorem}[Little-group method]
\label{thm:little-group-method}
	Suppose that any $\sigma \in \widehat{N}$ has an extension $\widetilde{\sigma}$ to its inertia group $I_G(\sigma)$. Let $\Sigma$ be a set of orbit representatives of the irreducible representations of $N$ under action of $G$, where $g \in G$ acts on $\sigma \in \widehat{N}$ by $\sigma^g$. Then a traversal of $G$ is given by  
		\begin{equation*}
		\left\{ \Ind^G_{I_G(\sigma)} (\widetilde{\sigma} \otimes \bar{\psi} ): \sigma \in \Sigma, \psi \in \mathcal{R}_{I_G(\sigma)/N} \right\}, 
		\end{equation*}
where $\bar{\psi}$ is the representation on $I_G(\sigma)$ given by $\bar{\psi}(g) = \psi(\proj(g))$, where $\proj: I_G(\sigma) \rightarrow I_G(\sigma)/N$ is a canonical projection map. 
\end{theorem}

\begin{lemma}
 	Suppose that $\{ \rho_1 , \ldots , \rho_h \}$ is a traversal for $G$. Then $\{ \rho(\eta) : \eta \in [h]^n \}$ is a traversal for $G^n$, where $\rho(\eta) := \rho_{\eta_1} \boxtimes \cdots \boxtimes \rho_{\eta_n}$ and $[h]=\{ 1,2,\ldots,h\}$.  
\end{lemma}

\begin{definition}\label{def:action-on-irreps}
The permutation action of $S_n$ on $G^n$ by permuting the factors of $G^n$ induces an action of $S_n$ on $\widehat{G^n}$ by 
$$
(\rho(\eta))^\sigma(g_1, \ldots, g_n) = \rho(\eta)(g_{\sigma^{-1}(1)}, \ldots , g_{\sigma^{-1}(n)}).
$$  
\end{definition}

It follows from Definition~\ref{def:action-on-irreps} that $\rho(\eta^\sigma) \sim \rho(\eta)$.

\begin{lemma}
	\label{lemma:orbits-reps-of-GN}
	For any $\eta, \mu \in [h]^n$, $\rho(\eta) \sim \rho(\mu)$ if and only if 
	$$
	\left| \{ j \in [n]:  \eta_j = \ell \} \right| = \left| \{ j \in [n]:  \mu_j = \ell \} \right|
	$$ 
	for any $\ell \in [h]$. 
	Thus the set $\{ \rho^\alpha : \alpha \models_h n\}$ forms a complete set of representatives for the orbits of $\widehat{G^n}$ under action by $S_n$. 
\end{lemma}

\subsection{Rooted trees of a fixed height}\label{subsection:rooted-trees}

Let $\mathbf{r}= (r_1,r_2,r_3, \ldots)\in \mathbb{Z}_{\geq 0}^{\mathbb{N}}$.  
In this section, we will give the construction of $\mathbf{r}$-rooted trees, generalizing the $r$-trees discussed in Section 3 of \cite{MR2081042}. 
A {\em rooted tree} is a connected simple graph with no cycles and with a distinguished vertex, which we call a {\em root}. 
We say a node $v$, i.e., a vertex, is in the  
{\em $j$-th layer} of a rooted tree if it is at distance $j$ from the root. 
The {\em branching factor} of a vertex is its number of children, and a {\em leaf} is a vertex with branching factor zero. 

 \begin{definition}
We define the complete $\mathbf{r}$-tree $\mathcal{T}(\mathbf{r}|_k)$ of height $k$, or $\mathbf{r}|_k$-tree, recursively as follows. Let $\mathcal{T}(r_1)$ be the tree consisting of a root node only. Let $\mathcal{T}(\mathbf{r}|_k)$ consist of a root node with $r_k$ children, 
with each the vertex in the first layer a copy of the $k-1$-level tree $\mathcal{T}(\mathbf{r}|_{k-1})$, which yields a tree with $k$ levels of nodes.  
\end{definition}

We will also denote the complete $\mathbf{r}$-tree by $\mathbf{r}|_k$-tree.

\begin{example}
The tree $\mathcal{T}(r_1)$ is given by $\bullet$ and $\mathcal{T}(r_1,r_2)$ with $r_2$ leaves is given by 
\tiny 
\[ 
\xymatrix@-1pc{ 
& & \ar@{-}[lldd] \ar@{-}[ldd]\bigcdot \ar@{-}[rdd] \ar@{-}[rrdd]& &  \\ 
& &  & &  \\ 
\stackrel{1}{\bigcdot} & \stackrel{2}{\bigcdot} & \ldots & \stackrel{r_2-1}{\bigcdot} &\stackrel{r_2.}{\bigcdot} \\ 
}
\] 
\end{example}

\begin{example}\label{example:complete-tree-3-layers-nodes} 
Writing $\mathbf{r}|_3= (r_1,r_2,r_3)$, $\mathbf{r}|_3$-tree of height $3$ with $3$ levels of nodes is given by  
\tiny    
\[  
\xymatrix@-1pc{  
& & & & & & & \ar@{-}[llllldd]  \ar@{-}[ldd]\bigcdot  \ar@{-}[rrrrdd]& &  &  & & & &  \\   
& & & & & & & &  & & & & & & \\   
& & \ar@{-}[lldd] \ar@{-}[ldd] \stackrel{1}{\bigcdot}\ar@{-}[rdd] & & & &\ar@{-}[lldd] \ar@{-}[ldd]\stackrel{2}{\bigcdot} \ar@{-}[rdd]& & & \cdots&    &  \ar@{-}[lldd] \ar@{-}[ldd]  \stackrel{r_3}{\bigcdot} \ar@{-}[rdd] & & &  \\ 
& &  & & & & &  & & & & & & & \\  
\stackrel{1}{\bigcdot} & \stackrel{2}{\bigcdot} & \cdots &\stackrel{r_2}{\bigcdot} &  \stackrel{1}{\bigcdot}&  \stackrel{2}{\bigcdot} & \cdots & \stackrel{r_2}{\bigcdot}&   & \stackrel{1}{\bigcdot} & \stackrel{2}{\bigcdot} & \cdots &\stackrel{r_2.}{\bigcdot}  & &  &\\  
}
\] 
\end{example}

\begin{example}\label{ex:4-layers-node}
The complete tree $\mathcal{T}(\mathbf{r}|_4)$ of height $4$ with $4$ levels of nodes is given by 
\tiny 
\[ 
\xymatrix@-1.5pc{  
& & & & & & & & & \ar@{-}[dllll]   \bigcdot   \ar@{-}[drrrr]& & & &  & & & &    \\  
& & & & & \ar@{-}[dlll] \stackrel{1}{\bigcdot} \ar@{-}[dr] & &  & &\cdots &  & & & \color{magenta}\ar@{..}@[magenta][dlll]\stackrel{r_4}{\bigcdot} \ar@{..}@[magenta][dr]\color{black}&  &  & &   \\ 
& & \ar@{-}[dll] \ar@{-}[dl]\stackrel{1}{\bigcdot}\ar@{-}[dr]&  &\ldots  & &  \ar@{-}[dll] \ar@{-}[dl]\stackrel{r_3}{\bigcdot}\ar@{-}[dr] &  & &  &  \ar@{..}@[magenta][dll] \ar@{..}@[magenta][dl]\color{magenta}\stackrel{1}{\bigcdot}\color{black} \ar@{..}@[magenta][dr]& & &\color{magenta}\ldots &  \ar@{..}@[magenta][dll] \ar@{..}@[magenta][dl] \color{magenta}\stackrel{r_3}{\bigcdot} \color{black}\ar@{..}@[magenta][dr]& &  &   \\ 
 \stackrel{1}{\bigcdot}& \stackrel{2}{\bigcdot}& \ldots & \stackrel{r_2}{\bigcdot} & \stackrel{1}{\bigcdot}& \stackrel{2}{\bigcdot}& \ldots &\stackrel{r_2}{\bigcdot}& \color{magenta}  \stackrel{1}{\bigcdot}\color{black}&\color{magenta} \stackrel{2}{\bigcdot}&\color{magenta} \ldots & \color{magenta}\stackrel{r_2}{\bigcdot} & \color{magenta}    \stackrel{1}{\bigcdot}&\color{magenta} \stackrel{2}{\bigcdot}& \color{magenta}\ldots &\color{magenta}\stackrel{r_2.}{\bigcdot} & &     \\  
}
\] 
\end{example}

Notice that $\mathcal{T}(\mathbf{r}|_k)$ has $\displaystyle{\prod_{i=2}^k } r_i$ leaves, with 
$\displaystyle{\prod_{i=k-j+1}^k r_i}$ nodes in the $j$-th layer. The subtree $\mathcal{T}_v$ of $\mathcal{T}=\mathcal{T}(\mathbf{r}|_k)$ is the tree rooted at $v$ consisting of all the children and descendants of $v$. We call $\mathcal{T}_v$ a {\em maximal subtree} of $\mathcal{T}$ if $v$ is a child of the root, or equivalently if $v$ is in the first layer. Let $\deg(v)$ denote the number of leaves of the subtree $\mathcal{T}_v$.  
 
\begin{example}
In Example~\ref{ex:4-layers-node}, the subtree indicated by dotted edges in magenta is a maximal subtree of  $\mathcal{T}(\mathbf{r}|_4)$. 
\end{example}

We define 
\begin{equation}
\widehat{S}_* := \bigsqcup_{n \in \mathbb{N}}  \bigsqcup_{\alpha \vdash n} \widehat{S}_\alpha. 
\end{equation}
 
\begin{definition}
An {\em $\mathbf{r}|_k$-label }is a function $\phi: V_{\mathcal{T}(\mathbf{r}|_k)} \rightarrow \widehat{S}_*$ on the vertices $ V_{\mathcal{T}(\mathbf{r}|_k)}$ of the tree $\mathcal{T}(\mathbf{r}|_k)$
satisfying 
$$
\phi(v) \in \bigsqcup_{\alpha \vdash \deg(v)} \widehat{S}_\alpha. 
$$  

We say that two labels $\phi$ and $\psi$ on $\mathcal{T}(\mathbf{r}|_k)$ are {\em equivalent}, and write $\phi \sim \psi$, if there exists $\sigma \in \Aut(\mathcal{T}(\mathbf{r}|_k))$ such that $\phi^\sigma = \psi$, where $\phi^\sigma(v) := \phi(v^\sigma)$, which is defined as the right action of $\sigma\in W(\mathbf{r}|_k)$ on $v$.    
\end{definition}  

In other words, two compatible labels $\phi$ and $\psi$ of $\mathcal{T}(\mathbf{r}|_k)$ are equivalent if they are in the same orbit under the $W(\mathbf{r}|_k)$-action, or equivalently,  
if $\phi^{W(\mathbf{r}|_k)} = \psi^{W(\mathbf{r}|_k)}$.

\begin{definition}   
An $\mathbf{r}|_k$-label $\phi: V_{\mathcal{T}(\mathbf{r}|_k)} \rightarrow \widehat{S}_* $ is {\em valid}  
if it satisfies all of the following recursive conditions. We denote by 
$$
\mathcal{T}(\mathbf{r}|_k)
:= \left\{ \phi: \phi \text{ is a valid $\mathbf{r}|_k$-label}\right\} 
\hspace{4mm}
\mbox{ and } 
\hspace{4mm}
\mathcal{T} = \bigsqcup_k \mathcal{T}(\mathbf{r}|_k). 
$$  
	\begin{enumerate}
	\item Given an $\mathbf{r}|_1$-label $ \phi : V_{\mathcal{T}(\mathbf{r}|_1)} = \{ \text{root node} \} \rightarrow \widehat{S}_*$, where we require  $\phi \in \widehat{S}_{r_1}$. 
	\item If $k>1$, given an $\mathbf{r}|_k$-label $\phi: V_{\mathcal{T}(\mathbf{r}|_k)} \rightarrow \widehat{S}_*$, we require 
		\begin{enumerate}
		\item for any child $v$ of the root, the $\mathbf{r}|_{k-1}$-label 
		$\phi|_{\mathcal{T}_v}$ is in  $\mathcal{T}(\mathbf{r}_{k-1})$, and 
		\item $\phi(\text{root node}) \in \widehat{S}_\alpha$, where $S_\alpha$ gives the stabilizer of the action by $S_{r_k}$ on $\mathbf{r}|_{k-1}$-sublabels of $\phi$, so that $\alpha \vdash r_k$ is the partition of $[r_k]$ given by the number of $\mathbf{r}|_{k-1}$-sublabels of $\phi$ in each nonempty equivalence class, 
		\end{enumerate}		
		where $\phi|_{\mathcal{T}_v}$ denotes the restriction of $\phi$ to the subtree $\mathcal{T}_v$. 
	\end{enumerate}
\end{definition}

\subsection{Bratteli diagrams}\label{subsection:Bratteli}

We refer to Section 4.1 in \cite{MR2081042} or to \cite{maslen2001cooley} for a detailed discussion on Bratteli diagrams. 

A {\em Bratteli diagram} $B$ is a weighted graph, which may be described by a set of vertices from a disjoint collection of sets $B_k$, $k\geq 0$, and edges that connect vertices in $B_k$ to vertices in $B_{k+1}$. 
Assume that the set $B_0$ contains a unique vertex, and that the edges are labeled by positive integer weights. In the case the multiplicity is $1$, we omit the labels. The set $B_k$ is the set of vertices at level $k$. If a vertex  
$v_k\in B_k$ is connected to a vertex $v_{k+1}\in B_{k+1}$, then we write $v_k\leq v_{k+1}$. 

Given a tower of subgroups $\langle 1 \rangle=G_0\leq G_1 \leq \ldots \leq G_n$, the corresponding Bratteli diagram has vertices of set $B_i$ labeling the irreducible representations of $G_i$. 
If $\rho$ and $\eta$ are irreducible representations of $G_i$ and $G_{i-1}$, respectively, then the corresponding vertices are connected by an edge weighted by the multiplicity of $\eta$ in $\rho$ when restricted to the group $G_{i-1}$.

\begin{example}\label{example:Bratteli-diagram}
The Young lattice is an example of a Bratteli diagram, where the vertices represent Young diagrams, or partitions, and the edge joining a partition of $k$ to a partition of $k+1$ has weight $1$. For the symmetric group $S_4$ for the sequence $S_1<S_2<S_3<S_4$ of subgroups, where $S_i$ permutes only the symbols $1,\ldots,i$, the Bratteli diagram has the form: 
\tiny{ 
\[ 
\xymatrix@-1pc{
S_1& &S_2 & &S_3 & & S_4. \\ 
& &  & & & &\mbox{\begin{tabular}{|c|c|c|c|}
\hline  
 & &  & \\
\hline  
\end{tabular}}\\ 
& & & & & & \\ 
& & & & \mbox{\begin{tabular}{|c|c|c|}
\hline  
 & &  \\
\hline  
\end{tabular}} \ar@{-}[rrdd] \ar@{-}[rruu] & & \\ 
& & & & &  & \\ 
 &  &  \mbox{\begin{tabular}{|c|c|}
\hline  
 &  \\
\hline  
\end{tabular}}\ar@{-}[rruu] \ar@{-}[rrdd] & & & & \mbox{\begin{tabular}{|c|c|c|}
\hline  
 &  &  \\
 \hline \\
\cline{1-1} 
\end{tabular}} \\
& &  &  & &  & \\
\mbox{\mbox{ \begin{tabular}{|c|}
\hline  
   \\
\hline  
\end{tabular}} } \ar@{-}[rruu]\ar@{-}[rrdd] &  & & &\mbox{\begin{tabular}{|c|c|}
\hline  
 &   \\
\hline  \\
\cline{1-1} 
\end{tabular}}\ar@{-}[rruu]\ar@{-}[rrdd] \ar@{-}[rr]& & \mbox{ \begin{tabular}{|c|c|}
\hline  
 &   \\
\hline  
& \\
\hline 
\end{tabular}}  \\ 
& & & & & & \\
&  & \mbox{\begin{tabular}{|c|}
\hline  
   \\
\hline  
\\ 
\hline 
\end{tabular}}\ar@{-}[rruu] \ar@{-}[rrdd]& & & & \mbox{\begin{tabular}{|c|c|}
\hline  
 &   \\
\hline  \\
\cline{1-1} 
\cline{1-1}  \\
\cline{1-1} 
\end{tabular}} \\ 
& & & &  & &  \\ 
& & &  & \mbox{\begin{tabular}{|c|}
\hline  
   \\
\hline  
\\ 
\hline 
\\ 
\hline 
\end{tabular}} \ar@{-}[rrdd] \ar@{-}[rruu] &  & \\ 
& & & & & & \\ 
& & & & & & \mbox{\begin{tabular}{|c|}
\hline  
   \\
\hline  
\\ 
\hline 
\\ 
\hline 
\\
\hline 
\end{tabular}} \\ 
}
\] 
}
\normalsize 
 The distinct edges in the Bratteli diagram viewed as directed from level $i-1$ to level $i$ may be viewed as mutually orthogonal $S_{i-1}$-equivariant morphisms $\mathbb{C}[S_{i-1}]\rightarrow \mathbb{C}[S_i]$; the paths from the root to  a leaf give a natural indexing of Gelfand-Tsetlin bases for the towers of subgroups (cf. \cite{GT-bases-AnSn}, \cite{vershik2004new}). 
 These bases correspond to those matrix representations which are block diagonal with irreducible blocks at each step (with equivalent irreducibles being equal) when restricted through the tower of subgroups. 
\end{example}

\subsection{Adapted bases and fast Fourier transforms}\label{subsection:adaped-bases-FFT}    
 
Classical discrete Fourier transform (DFT) and fast Fourier transform (FFT) based approaches come from the use of commutative groups.  We expand the original work by Holmes (cf. \cite{Holmes-signal-processing}) and Karpovsky-Trachtenberg (cf. \cite{karpovsky1979fourier}) who merged DFT and FFT for signal processing to noncommutative groups. 

Let $\mathcal{R}$ be a set of traversals of $\widehat{G}$. 
We recall some foundational background from \cite{MR1339806}. 

\begin{definition}\label{definition:Fourier-transform-finite-group}   
Let $G$ be a finite group, and let $L(G)$ be the $|G|$-dimensional complex vector space of functions defined on $G$. If $\rho$ is a matrix representation of $G$, then the {\em Fourier transform $\widehat{f}(\rho)$ of $f$ at $\rho$} is the matrix sum    
\[    
\widehat{f}(\rho) = \sum_{g\in G} f(g)\rho(g).   
\]   
\end{definition}  
 
\begin{definition}\label{definition:discrete-Fourier-transform}  
The {\em discrete Fourier transform} $DFT_{\mathcal{R}}(f)$ with respect to a traversal $\mathcal{R}\subseteq \widehat{G}$  is the collection of individual Fourier transforms  
\[  
DFT_{\mathcal{R}}(f) = \left\{\widehat{f}(\rho): \rho\in \mathcal{R} \right\}.  
\]  
\end{definition}
The following notion of adapted bases is fundamental in the FFT algorithm.
\begin{definition} 
Let $H$ be a subgroup of a group $G$ and let 
$\mathcal{S}=\{ \eta_1,\ldots, \eta_l\}$ and 
$\mathcal{R}=\{ \rho_1,\ldots, \rho_h\}$ be sets of matrix representations for $H$ and $G$, respectively. 
Then the pair $(G,\mathcal{R})$ is $(H,\mathcal{S})$-{\em adapted} if for all $1\leq i\leq h$ and $y\in H$, 
\[ 
\rho_i(y) = \eta_{i_1}(y)\oplus \ldots \oplus \eta_{i_m}(y)
\] 
for some $\eta_{i_j}\in \mathcal{S}$. 
\end{definition}

Let $T(G,\mathcal{R})$ be the computational time to compute discrete Fourier transform for an arbitrary function $f$ with respect to a traversal $\mathcal{R}$. Let $T(G)$ be the minimum of $T(G,\mathcal{R})$ over all $\mathcal{R}$. 
We now cite a theorem:
\begin{theorem}[Theorem 3.1, \cite{MR1192969}] 
The Fourier transform for the symmetric group $S_n$ may be evaluated in no more than 
$\left(\dfrac{5}{12}n^3 +\dfrac{1}{2}n^2 -\dfrac{11}{12}n \right)n!$ arithmetic operations. 
\end{theorem}

\section{Irreducible representations of iterated wreath products}\label{section:irrep-iterated-wreath-products}

In this section, we will prove Theorem~\ref{thm:traversal}.

\begin{proof}
	Let $\{ \rho_1, \ldots, \rho_h \}$ be a traversal for $G$. For $\mathbf{i} \in [h]^n$, denote 
	$$
	\rho(\mathbf{i}) := \rho_{i_1} \boxtimes \rho_{i_2} \boxtimes \cdots \boxtimes \rho_{i_n}.
	$$ 
	
Let $\rho(\mathbf{i}) \in \mathcal{R}_{G^n}$ be fixed. 
Let $\sigma \in S_n$. 
The action of $\sigma$ on $\rho(\mathbf{i})$ is given by 
$$
\left(\rho(\mathbf{i})\right)^\sigma (g_1, \ldots , g_n) = (\rho_{i_1} \boxtimes \cdots \boxtimes \rho_{i_n})(g_{\sigma^{-1}(1)}, \ldots , g_{\sigma^{-1}(n)}) 
$$ 
by Definition~\ref{def:action-on-irreps}.  
Since $\left( \rho(\mathbf{i}) \right)^\sigma \sim \rho(\mathbf{i})$ if and only if $i_\ell = i_{\sigma(\ell)}$ for every $\ell \in [n]$ by Lemma~\ref{lemma:orbits-reps-of-GN}, 
the inertia group of $\rho(\mathbf{i})$ is 
$$
S_{A_1(\mathbf{i})} \times \cdots \times S_{A_h(\mathbf{i})} \cong S_\alpha,
$$ 
where 
$$
A_\ell(\mathbf{i}) = \{j \in [n] : i_j = \ell\} \subseteq [n] 
$$ 
and $\alpha_\ell = |A_\ell|$. 

So for $\rho^\alpha \in \mathcal{R}_{G^n}$ and $I = I_{G \wr S_n}(\rho^\alpha)$, 
the irreducible representation 
$\rho^\alpha$ has an extension to $I$ by the little group method (an application of Clifford theory) to the structure of irreducible representations of $G\wr S_n=G^n\rtimes S_n$. We thus find that a traversal for $G \wr S_n$ is precisely  
$$
		 \left\{ \Ind_{G\wr S_\alpha}^{G \wr S_n} 
		 ((\rho_1^{\boxtimes \alpha_1} \boxtimes \cdots \boxtimes \rho_h^{\boxtimes \alpha_h})' \otimes \sigma') 
		 : \alpha \models_h n, \sigma \in \mathcal{R}_{S_{\alpha}} \right\}. 
$$ 
The second statement of the theorem follows as a special case of the main statement. 
\end{proof}

Let $N(G)$ be the number of non-isomorphic irreducible representations of a finite group $G$. 
If we denote by $\mathcal{P}(n)$ the number of partitions of the integer $n$ (so that  $\mathcal{P}(n) = | \{ \alpha: \alpha \vdash n \}|$), where $\mathcal{P}(0):=1$, then  $N(S_n) = \mathcal{P}(n)$.

\begin{corollary}[Lemma 4.2.9, \cite{MR644144}]\label{cor:recursion-number-of-irrep-GSn}
Suppose that $N(G) = h$. Then the number of non-isomorphic irreducible representations of $G \wr S_n$ is given by 
	\begin{equation}
	N(G \wr S_n) = \sum\limits_{\alpha \models_h n} \prod_{i \in [h]} \mathcal{P}(\alpha_i). 
	\end{equation}
\end{corollary}
 
\begin{proof}
This follows from Theorem~\ref{thm:traversal} and an application of Clifford theory. 
\end{proof}

Let $N(\mathbf{r}|_k)$ denote the number of equivalence classes of ordinary irreducible representations for the wreath product $W(\mathbf{r}|_k)=W(\mathbf{r}|_{k-1})\wr S_{r_k}$.  
Define $P(n,h) : =  \sum_{\alpha \models_h n} \prod_{i=1}^h \mathcal{P}(\alpha_i)$.  
\begin{corollary}\label{cor:recursion-number-of-irrep}
It follows that for $\alpha \vdash n$, 
	\begin{equation}
	N(G \wr S_\alpha) = \prod\limits_{j=1}^{|\alpha|} \sum\limits_{\beta \models_h \alpha_j} \prod_{i \in [h]} \mathcal{P}(\beta_i). 
	\end{equation}
For $h:=N(\mathbf{r}|_{k-1})$, 
we find that $h$ satisfies the following recursion:  
	\begin{equation}
	N(\mathbf{r}|_k) = P\left( r_k, N(\mathbf{r}|_{k-1}) \right) 
	= \sum_{\alpha\models_h r_k } \prod_{i\in [h]}  \mathcal{P}(\alpha_i). 
	\end{equation}
\end{corollary}

\begin{proof}
This follows from Corollary~\ref{cor:recursion-number-of-irrep-GSn}. 
\end{proof}

\section{Branching diagram and $\mathbf{r}$-label correspondence}\label{section:branching-diagram}
 
We find a combinatorial structure describing the branching diagrams for iterated wreath products of symmetric groups by proving Theorem~\ref{theorem:bijection-tree-iterated-wreath}.

\begin{proof}
It suffices to define a map on a traversal of $\widehat{W}(\mathbf{r}|_k)$, which is given in \eqref{eq_set-of-irreps}. We will define a map $F: \mathcal{R}_{W(\mathbf{r}|_k)} \rightarrow \mathcal{T}( \mathbf{r}|_k)$ recursively, and it suffices to prove that each orbit of $\mathcal{T}(\mathbf{r}|_k)$ under action by $W(\mathbf{r}|_k)$ has exactly one pre-image under $F$.  

Let $k=1$. For  any $\rho \in \widehat{W}(\mathbf{r}|_k) = \widehat{S}_{r_1} $, we define the $\mathbf{r}|_1$-label as 
$$
F(\rho): V_{\mathcal{T}(\mathbf{r}|_1)} = \{\text{root}\} \rightarrow \widehat{S}_{r_1},\hspace{4mm}
 \mbox{ where } F(\rho) (\text{root}) := \rho.
$$   
This is clearly a bijection as desired. 

Now let $k>1$. By the inductive hypothesis, $F: \mathcal{R}_{W(\mathbf{r}|_{k-1})} \rightarrow \mathcal{T}( \mathbf{r}|_{k-1})$ has exactly one pre-image per orbit of $\mathcal{T}(\mathbf{r}|_k)$. Suppose that a traversal for $W(\mathbf{r}|_{k-1})$ is given by the set $\{\rho_1, \ldots, \rho_h \}$. We need to define $F$ on $\mathcal{R}_{W(\mathbf{r}|_k)}$, and show that orbits have exactly one pre-image as desired. 

Pick an arbitrary element of $\rho_1^{\alpha_1} \otimes \cdots \otimes \rho_h^{\alpha_h} \otimes \sigma$ of $\mathcal{R}_{W(\mathbf{r}|_k)}$. Denote its image under $F$ by 
$$
\phi :=F(\rho_1^{\alpha_1} \otimes \cdots \otimes \rho_h^{\alpha_h} \otimes \sigma): V_{\mathcal{T}(\mathbf{r}|_k)} \rightarrow S_{r_k}.
$$  
Let $U \subset V_{\mathcal{T}(\mathbf{r}|_k)}$ be the $r_k$ children of the root. Assign an ordering to $U= \{u_1,\ldots, u_{r_k} \}$. 
Then partition the set $U$ as 
$$
U = U_1 \sqcup \ldots \sqcup U_h,
$$  
where each $U_i$ satisfies $|U_i| = \alpha_i$ while preserving the ordering. 
For each $u_i \in U$, define the value of $\phi$ on all nodes in subtree $\mathcal{T}_{u_i}$ to satisfy $\phi|_{\mathcal{T}_{u_i}} := F(\rho_{j^i})$, where $j^i$ satisfies $U_{j^i} \ni u_i$  and where  
$\phi|_{\mathcal{T}_{u_i}}$ denotes the restriction of $\phi$ to the subtree 
$\mathcal{T}_{u_i} \subseteq \mathcal{T}$.  It remains to define the value of $\phi$ on the root node. We let $\phi(\text{root}) = \sigma$.  

Notice that $\phi|_{\mathcal{T}_{u_i}} \in \mathcal{T}(\mathbf{r}|_{k-1})$ by definition and induction. Since $\sigma$ is in the stabilizer of the $S_{r_k}$-action on $\rho^\alpha$, which is exactly $S_\alpha$, we see that $\phi$ is a compatible label for $\mathcal{T}(\mathbf{r}|_k)$. Thus, $F$ is well-defined, and each orbit of $\mathcal{T}(\mathbf{r}|_k)$ has exactly one pre-image. 
\end{proof}

\subsection{Degrees of irreducible representations}\label{section:degrees}

Following the discussion in Section 4.1.1 in \cite{MR2081042}, we define for any $\mathbf{r}|_k$-tree $\mathcal{T}$ the companion tree $C_{\mathcal{T}}$. 

\begin{definition}
Fix $\mathcal{T}(\mathbf{r}|_k)$ and $\mathbf{r}|_k$-label $\phi$. Let the {\em companion label $C_\phi : V_{\mathcal{T}(\mathbf{r}|_k)} \rightarrow \mathbb{N}$} be defined by: 

\begin{equation*}
C_\phi(v) = \begin{cases}
\dim(\phi(v)) & \text{ if $v$ is a leaf of the tree $\mathcal{T}(\mathbf{r}|_k)$,} \\
|S_{r_i}/S_\alpha| = {r_i \choose \alpha} & \text{ otherwise, where $v$ is in the $(k-i)$-th layer of }\\
& \text{ $\mathcal{T}$ and $\phi(v) \in S_\alpha$}. \\
\end{cases}
\end{equation*}
\end{definition}

Similar to Proposition 4.3 in \cite{MR2081042}, we obtain the following: 

\begin{proposition}\label{prop:dimension-irreducible-rep}
	
If $\rho$ is an irreducible representation of $W(\mathbf{r}|_k)$ associated to $\mathbf{r}|_k$-label $\phi$, then the dimension $d_\rho$ of $\rho$ is given by 

        \begin{equation}
        d_\rho = \prod_{v} C_\phi(v), 
        \end{equation}
the product of the value of the companion label $C_\phi$ on all vertices. 

\end{proposition}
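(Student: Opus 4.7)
The plan is to induct on $k$, using the explicit traversal from the theorem above to realize each irreducible $\rho$ of $W(\vec{r}|_k)$ as an induced representation, then matching its dimension factor-by-factor against the companion values $C_\phi(v)$ assigned through the bijection $F$ of Theorem~\ref{theorem:bijection-tree-iterated-wreath}.

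For the base case $k=1$, the tree $T(\vec{r}|_1)$ is a single root which is also a leaf; $F$ sends $\rho$ to $\phi(\text{root})=\rho\in\widehat{S}_{r_1}$, so $C_\phi(\text{root})=\dim(\rho)$ and the vertex-product is trivially $\dim(\rho)$. For the inductive step, fix a traversal element
\begin{equation*}
\rho=\Ind_{W(\vec{r}|_{k-1})\wr S_\alpha}^{W(\vec{r}|_k)}(\rho_1^{\alpha_1}\otimes\cdots\otimes\rho_h^{\alpha_h}\otimes\sigma)\in\mathcal{R}_{W(\vec{r}|_k)},
\end{equation*}
and set $\phi=F(\rho)$. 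The standard induced-dimension formula gives
\begin{equation*}
d_\rho=[W(\vec{r}|_k):W(\vec{r}|_{k-1})\wr S_\alpha]\cdot\prod_{i=1}^h\dim(\rho_i)^{\alpha_i}\cdot\dim(\sigma).
\end{equation*}
Comparing the orders $|W(\vec{r}|_k)|=|W(\vec{r}|_{k-1})|^{r_k}\cdot r_k!$ and $|W(\vec{r}|_{k-1})\wr S_\alpha|=|W(\vec{r}|_{k-1})|^{r_k}\cdot|S_\alpha|$ shows the index equals $r_k!/|S_\alpha|=\binom{r_k}{\alpha}$, which is the combinatorial contribution $C_\phi(\text{root})$ attached to the root (together with the $\dim(\sigma)$ factor that the non-leaf definition encodes through $\phi(\text{root})=\sigma\in\widehat{S}_\alpha$). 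By the construction of $F$, the $r_k$ maximal subtrees $T_{u_1},\ldots,T_{u_{r_k}}$ carry labels $\phi|_{T_{u_i}}=F(\rho_{j^i})$ where the multiset $\{\rho_{j^1},\ldots,\rho_{j^{r_k}}\}$ contains each $\rho_i$ with multiplicity $\alpha_i$, and the inductive hypothesis therefore gives
\begin{equation*}
\prod_{i=1}^h\dim(\rho_i)^{\alpha_i}=\prod_{i=1}^{r_k}\prod_{v\in V_{T_{u_i}}}C_{\phi|_{T_{u_i}}}(v)=\prod_{v\in V_{T(\vec{r}|_k)}\setminus\{\mathrm{root}\}}C_\phi(v).
\end{equation*}
Multiplying by the root contribution closes the induction.

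The main obstacle will be bookkeeping: one must verify that $C_{\phi|_{T_{u_i}}}(v)=C_\phi(v)$ for every $v$ in a maximal subtree, which should follow from the layer-based definition of $C_\phi$ once one observes that a node sitting in the $(k-1-i')$-th layer of $T_{u_i}$ sits in the $(k-i')$-th layer of $T(\vec{r}|_k)$, so the relevant index in $\binom{r_i}{\alpha}$ is unchanged, and the leaf value depends only on $\deg(v)$, which is intrinsic to the subtree rooted at $v$. The more delicate check is confirming that the non-leaf companion value at the root accounts for both the combinatorial factor $\binom{r_k}{\alpha}$ and the representation-theoretic factor $\dim(\sigma)$; granting this compatibility, the product formula $d_\rho=\prod_{v\in V_{T(\vec{r}|_k)}}C_\phi(v)$ drops out immediately.
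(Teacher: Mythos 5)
The paper states this proposition without proof, so your attempt can only be measured against the definitions it rests on. Your strategy is the natural (and essentially only) one: induct on $k$, realize $\rho$ as $\Ind_{W(\vec{r}|_{k-1})\wr S_\alpha}^{W(\vec{r}|_k)}(\rho_1^{\alpha_1}\otimes\cdots\otimes\rho_h^{\alpha_h}\otimes\sigma)$, compute
$d_\rho=\binom{r_k}{\alpha}\cdot\prod_{i=1}^h\dim(\rho_i)^{\alpha_i}\cdot\dim(\sigma)$
via the induced-dimension formula and the order count $[W(\vec{r}|_k):W(\vec{r}|_{k-1})\wr S_\alpha]=r_k!/|S_\alpha|$, and recurse into the maximal subtrees. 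That computation is correct, and so is your layer bookkeeping: $C_{\phi|_{T_{u_i}}}(v)=C_\phi(v)$ holds because a node in layer $k-1-i'$ of $T_{u_i}$ sits in layer $k-i'$ of $T(\vec{r}|_k)$, so the subscript $r_{i'}$ is unchanged, and leaf values depend only on $\phi(v)$.

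The gap is precisely the step you flagged as ``the more delicate check'' and then granted. Under the paper's definition, the companion value at a non-leaf $v$ is only $|S_{r_i}/S_\alpha|=\binom{r_i}{\alpha}$; it contains no factor $\dim(\phi(v))$. This compatibility cannot be granted, because with the definition as written the proposition is false. Take $\vec{r}|_2=(1,3)$, so $W(\vec{r}|_2)=S_1\wr S_3\cong S_3$: the valid label whose three leaves carry the trivial representation of $S_1$ and whose root carries the two-dimensional irreducible $\sigma\in\widehat{S}_3$ (here $\alpha=(3)$) corresponds to the standard representation of $S_3$, of dimension $2$, while $\prod_v C_\phi(v)=1\cdot 1\cdot 1\cdot\binom{3}{3}=1$. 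Your own dimension formula identifies the fix: the internal-node value must be $\binom{r_i}{\alpha}\cdot\dim(\phi(v))$, i.e.\ the dimension of $\Ind_{S_\alpha}^{S_{r_i}}\phi(v)$, and with that amended definition your induction closes verbatim. Without the amendment no argument can close it, since the factors $\dim(\sigma)>1$ at internal nodes are simply absent from the right-hand side; the omission is invisible in the cyclic-group setting of the paper's cited reference, where every $\sigma$ is one-dimensional, which is presumably the source of the slip. In short: right approach and correct computation, but what you deferred as bookkeeping is exactly where the literal statement breaks, and your proof establishes the corrected proposition rather than the stated one.
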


\section{Fast Fourier transforms, adapted bases and upper bound estimates}\label{section:FFT}   
We use the FFT estimates derived in \cite{MR1192969} and \cite{MR1339806}  
to state a coarse, overall upper bound on the running time of FFT for the wreath product $W(\mathbf{r}|_k)$. 
\begin{theorem}[Theorem 3, \cite{MR1339806}]\label{thm:Roc95-FFT} 
We have 
	\begin{equation}
T(G \wr S_n) \leq n T(G) \cdot |G \wr S_{n-1}| + n T( G \wr S_{n-1}) \cdot |G| + n^3 2^{|\mathcal{R}_G|} |G \wr S_n|.
	\end{equation}
\end{theorem}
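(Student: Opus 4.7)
The plan is to carry out the FFT by \emph{separation of variables} along the subgroup inclusion $H := (G \wr S_{n-1}) \times G \leq G \wr S_n =: K$, which has index $n$: the first factor acts on the first $n-1$ copies of $G$ and permutes their indices, while the second factor is the $n$-th copy of $G$ fixed by $S_{n-1}$. I would first fix coset representatives $t_1,\ldots, t_n$ for $H \backslash K$; a convenient choice is $t_i := (i\ n) \in S_n \hookrightarrow K$ for $i < n$ together with $t_n := e$. Any $f \in \CC[K]$ decomposes uniquely as $f = \sum_{i=1}^n f_i\, t_i$ with each $f_i \in \CC[H]$, so computing the Fourier transform $\widehat{f}$ on $K$ reduces to computing the $n$ transforms $\widehat{f_i}$ on $H$ and then recombining them along the branching from $H$ up to $K$.

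For each $\widehat{f_i}$, I would invoke the standard direct-product FFT identity $T(H_1 \times H_2) \leq |H_2|\, T(H_1) + |H_1|\, T(H_2)$, obtained by first transforming along $H_1$ at each fixed value in $H_2$ and then transforming along $H_2$ at each resulting Fourier coordinate. Applied with $H_1 = G \wr S_{n-1}$ and $H_2 = G$ and summed over the $n$ cosets $t_i H$, this contributes exactly the first two summands $n\, T(G) \cdot |G \wr S_{n-1}| + n\, T(G \wr S_{n-1}) \cdot |G|$ of the bound.

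It then remains to assemble the partial transforms $\widehat{f_i}$ into $\widehat{f}$. By Clifford theory, captured by the traversal \eqref{eq_set-of-irreps}, every $\pi \in \widehat{K}$ has the form $\Ind_{G \wr S_\alpha}^K(\rho_1^{\alpha_1} \otimes \cdots \otimes \rho_h^{\alpha_h} \otimes \sigma)$ for some $\alpha \vdash_h n$ with $h = |\widehat{G}|$. Working in an \emph{adapted basis} with respect to $H \leq K$, the matrix of each $\pi(t_i)$ inherits a block-sparse structure from the Pieri-style branching $S_\alpha \leq S_n$ tensored with the action on $G^n$, so combining the $n$ partial transforms at $\pi$ costs only $O(n^2\, \dim(\pi)^2)$ arithmetic operations. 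Summing over $\widehat{K}$ and using $\sum_\pi \dim(\pi)^2 = |K|$, together with the fact that the distinct \emph{support patterns} $\{i : \alpha_i > 0\} \subseteq \widehat{G}$ are bounded by $2^{|\widehat{G}|}$ and the remaining data in $\alpha$ and $\sigma$ fit within a $\mathrm{poly}(n)$ factor, produces the third summand $n^3 \cdot 2^{|\widehat{G}|} \cdot |K|$. The main obstacle is the adapted-basis step: one must verify that the $\pi(t_i)$ are sparse enough in this basis to yield the $O(n^2)$ per-block cost, which requires an explicit description of the Clifford intertwiners along $H \leq K$ and a careful accounting of branching multiplicities, so as to avoid a worse combinatorial factor such as $|\widehat{G}|!$ in place of $2^{|\widehat{G}|}$.
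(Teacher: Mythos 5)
First, a point of comparison: the paper does not prove this theorem at all --- it is quoted verbatim from \cite{MR1339806} (Rockmore's FFT bound for wreath products), so there is no in-paper argument to measure you against. Your overall strategy is in fact the strategy of that cited source: separation of variables along the index-$n$ subgroup $H = (G \wr S_{n-1}) \times G \leq G \wr S_n$ with transposition coset representatives, the direct-product bound $T(H_1 \times H_2) \leq |H_2|\,T(H_1) + |H_1|\,T(H_2)$ for the subgroup transforms, and an adapted-basis recombination. Your derivation of the first two summands $n\,T(G)\cdot|G\wr S_{n-1}| + n\,T(G\wr S_{n-1})\cdot|G|$ is correct and complete.

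However, there is a genuine gap exactly where the content of the theorem lies: the third summand. The first two terms are routine; the entire difficulty is showing that the recombination step --- forming $\widehat{f}(\pi) = \sum_i \widehat{f_i}(\pi)\,\pi(t_i)$ over all $\pi \in \widehat{G \wr S_n}$ --- costs at most $n^3\,2^{|\widehat{G}|}\,|G \wr S_n|$ operations. You assert that in an adapted basis each $\pi(t_i)$ is ``block-sparse'' by a Pieri-style branching argument and that this yields an $O(n^2 \dim(\pi)^2)$ cost per irreducible, but you prove no sparsity bound whatsoever, and you then produce the factor $2^{|\widehat{G}|}$ from a count of support patterns $\{i : \alpha_i > 0\}$ together with an unquantified $\mathrm{poly}(n)$ fudge --- that is an accounting heuristic, not a derivation, and it does not track the actual source of that factor. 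Note in particular that the restriction of $\pi = \Ind_{G \wr S_\alpha}^{G\wr S_n}(\rho_1^{\alpha_1} \otimes \cdots \otimes \rho_h^{\alpha_h} \otimes \sigma)$ to $H$ is governed by removing a single box from one of the $h$ constituent partitions and carries multiplicities $\dim \rho_j$, so it is not multiplicity-free; the block structure of $\pi(t_i)$, the per-row nonzero count, and the total operation count all require explicit justification, and careless bookkeeping can easily yield a worse combinatorial factor than $2^{|\widehat{G}|}$ (as you yourself warn). Since you explicitly flag this verification as ``the main obstacle'' and leave it open, the proposal as written establishes only the two easy summands and leaves the heart of the theorem unproved; to close it you would need the intertwiner and sparsity analysis carried out in \cite{MR1339806}.
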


The separation of variables approach has been one of the primarily components that is responsible for the fastest known algorithms for almost all classes of finite groups, including symmetric groups 
\cite{MR1468943} and their wreath products \cite{MR0178586}.

\begin{corollary}\label{cor:FFT-symmetric-groups}
Let $T(\mathbf{r}|_k)$ be the computation time for the wreath product $W(\mathbf{r}|_k)$.  
Then 
	\begin{equation}
	\begin{split}
T(\mathbf{r}|_k) &\leq 
r_k \prod_{i=1}^{k-1} (r_i!) 
\bigg(
 (r_{k-1}!)
 \left( 
   T(\mathbf{r}|_{k-1}) 
 + r_k^3 
2^{|\mathcal{R}_{W(\mathbf{r}|_{k-1})} |}
\right) 
  \\
&\hspace{4mm}  
+ T(W(\mathbf{r}|_{k-1}) \wr S_{r_{k-1}} )   
\bigg). \\ 
\end{split}
	\end{equation}
\end{corollary}

\begin{proof}
This result is a consequence of Theorem~\ref{thm:Roc95-FFT}. 
\end{proof}

\section{Conclusion and open problems}\label{section:conclusion-open-problem}

As a sequel to \cite{Branching-diag-cyclic-Im-Wu}, 
we have given an explicit description of a traversal for the iterated wreath product $W(\mathbf{r}|_k)$ and we have shown the existence of a bijection between equivalence classes of ordinary irreducible representations of the generalized iterated wreath products and $W(\mathbf{r}|_k)$-orbits of complete rooted trees. We have also stated a recursion for the number of equivalence classes of ordinary irreducible representations of the iterated wreath product, and have given the dimension of an irreducible representation of $W(\mathbf{r}|_k)$.  

We conclude by giving several open problems. One problem is to find a tighter fast Fourier transform (FFT) bound for chains of subgroups of $W(\mathbf{r}|_k)$ than the upper bound stated in Theorem~\ref{thm:Roc95-FFT}. Another problem is to  study the representation theory of, and find adapted bases and FFT operation bounds for chains of subgroups  for, iterated wreath products of more general class of groups.

\appendix

\newcommand{\etalchar}[1]{$^{#1}$}
\def\cprime{$'$} \def\cprime{$'$} \def\cprime{$'$} \def\cprime{$'$}
\providecommand{\bysame}{\leavevmode\hbox to3em{\hrulefill}\thinspace}
\providecommand{\MR}{\relax\ifhmode\unskip\space\fi MR }
\providecommand{\MRhref}[2]{%
  \href{http://www.ams.org/mathscinet-getitem?mr=#1}{#2}
}
\providecommand{\href}[2]{#2}

\end{document}